\documentclass[12pt]{article}
\usepackage{amssymb, url}
\usepackage{amsmath,amsthm}
\usepackage{verbatim}
\usepackage{graphicx}
\usepackage{epstopdf}
\usepackage{fullpage}
\usepackage[hang,flushmargin]{footmisc}
\usepackage{hyperref}

\newtheorem{theorem}{Theorem}
\newtheorem{lemma}[theorem]{Lemma}

\newtheorem{prop}[theorem]{Proposition}
\newtheorem{cor}[theorem]{Corollary}

\newcommand{\rpot}{r_{pot}}

\DeclareMathOperator{\comp}{comp}
\newcommand{\st}{\colon\,}
\newcommand{\Z}{\mathbb{Z}}

\topmargin .25in

\title{Chv\'{a}tal-type results for degree sequence Ramsey numbers}

\author{ Christopher Cox$^{1,3,4}$\and Michael Ferrara$^{2,3,4}$ \and Ryan R.\ Martin$^{1,3,6}$ \and Benjamin Reiniger$^{3,4,5}$}

\begin{document}
\maketitle
\footnotetext[1]{Department of Mathematics, Iowa State University, Ames, IA 50011; {\tt $\{$cocox,rymartin$\}$@iastate.edu}}
\footnotetext[2]{Department of Mathematical and Statistical Sciences, University of Colorado Denver, Denver, CO 80217 ; {\tt michael.ferrara@ucdenver.edu}.}
\footnotetext[3]{Research supported in part by NSF grant DMS-1427526, ``The Rocky Mountain - Great Plains Graduate Research Workshop in Combinatorics".}  
\footnotetext[4]{Research supported in part by Simons Foundation Collaboration Grant \#206692 (to Michael Ferrara).}
\footnotetext[5]{Department of Mathematics, University of Illinois Urbana-Champaign, Urbana, IL 60801; {\tt reinige1@illinois.edu}.}
\footnotetext[6]{This author's research was partially supported by the National Security Agency (NSA) via grant H98230-13-1-0226. This author's contribution was completed in part while he was a long-term visitor at the Institute for Mathematics and its Applications. He is grateful to the IMA for its support and for fostering such a vibrant research community.}
\begin{abstract}
A sequence of nonnegative integers $\pi =(d_1,d_2,...,d_n)$ is {\it graphic} if there is a (simple) graph $G$ of order $n$ having degree sequence $\pi$.  In this case, $G$ is said to {\it realize} or be a {\it realization of} $\pi$.  Given a graph $H$, a graphic sequence $\pi$ is \textit{potentially $H$-graphic} if there is some realization of $\pi$ that contains $H$ as a subgraph.  

In this paper, we consider a degree sequence analogue to classical graph Ramsey numbers.  For graphs $H_1$ and $H_2$, the \textit{potential-Ramsey number} $\rpot(H_1,H_2)$ is the minimum integer $N$ such that for any $N$-term graphic sequence $\pi$, either $\pi$ is potentially $H_1$-graphic or the complementary sequence $\overline{\pi}=(N-1-d_N,\dots, N-1-d_1)$
is potentially $H_2$-graphic.   

We prove that if $s\ge 2$ is an integer and $T_t$ is a tree of order $t> 7(s-2)$, then $$\rpot(K_s, T_t) = t+s-2.$$ This result, which is best possible up to the bound on $t$, is a degree sequence analogue to a classical 1977 result of Chv\'{a}tal on the graph Ramsey number of trees vs.\ cliques.  To obtain this theorem, we prove a sharp condition that ensures an arbitrary graph packs with a forest, which is likely to be of independent interest.  

\end{abstract}
\section{Introduction}

A sequence of nonnegative integers $\pi =(d_1,d_2,\dotsc,d_n)$ is {\it graphic} if there is a (simple) graph $G$ of order $n$ having degree sequence $\pi$.  In this case, $G$ is said to {\it realize} or be a {\it realization of} $\pi$, and we will write $\pi=\pi(G)$.  Unless otherwise stated, all sequences in this paper are assumed to be nonincreasing.  

There are a number of theorems characterizing graphic sequences, including classical results by Havel~\cite{hav} and Hakimi~\cite{hak} and an independent characterization by Erd\H{o}s and Gallai~\cite{EG}.  However, a given graphic sequence may have a diverse family of nonisomorphic realizations; as such it has become of recent interest to determine when realizations of a given graphic sequence have various properties. As suggested by A.R.~Rao in~\cite{Ra79}, such problems can be broadly classified into two types, the first described as ``forcible'' problems and the second as ``potential'' problems.  In a forcible degree sequence problem, a specified graph property must exist in every realization of the degree sequence $\pi$, while in a potential degree sequence problem, the desired property must be found in at least one realization of $\pi$. 

 Results on forcible degree sequences are often stated as traditional problems in structural or extremal graph theory, where a necessary and/or sufficient condition is given in terms of the degrees of the vertices (or equivalently the number of edges) of a given graph (e.g.~Dirac's theorem on hamiltonian graphs).  
 
A number of degree sequence analogues to classical problems in extremal graph theory appear throughout the literature, including potentially graphic sequence variants of Hadwiger's Conjecture~\cite{CO,DM,RoSo10}, extremal graph packing theorems~\cite{BFHJKW, DFJS}, the Erd\H{o}s-S\'{o}s Conjecture~\cite{LiYin09}, and the Tur\'{a}n Problem~\cite{EJL, FLMW}.  

\subsection{Potential-Ramsey Numbers}

Given graphs $H_1$ and $H_2$, the \textit{Ramsey number} $r(H_1,H_2)$ is the minimum positive integer $N$ such that every red/blue coloring of the edges of the complete graph $K_N$ yields either a copy of $H_1$ in red or a copy of $H_2$ in blue.  In~\cite{BFHJ}, the authors introduced the following potential degree sequence version of the graph Ramsey numbers.  

Given a graph $H$, a graphic sequence $\pi=(d_1,\dots,d_n)$ is \textit{potentially $H$-graphic} if there is some realization of $\pi$ that contains $H$ as a subgraph.  For graphs $H_1$ and $H_2$, the \textit{potential-Ramsey number} $\rpot(H_1,H_2)$ is the minimum integer $N$ such that for any $N$-term graphic sequence $\pi$, either $\pi$ is potentially $H_1$-graphic or the complementary sequence $$\overline{\pi}=(\overline{d}_1,\dots,\overline{d}_N)=(N-1-d_N,\dots, N-1-d_1)$$
is potentially $H_2$-graphic.   

In traditional Ramsey theory, the enemy gives us a coloring of the edges of $K_N$, and we must be sure that there is some copy of $H_1$ or $H_2$ in the appropriate color, regardless of the enemy's choice.  When considering the potential-Ramsey problem, we are afforded the additional flexibility of being able to replace the enemy's red subgraph with any graph having the same degree sequence.  Consequently, it is immediate that for any $H_1$ and $H_2$, $$\rpot(H_1,H_2)\le r(H_1,H_2).$$ 

While this bound is sharp for certain pairs of graphs, for example $H_1=P_n$ and $H_2=P_m$ (see~\cite{BFHJ}), in general that is far from the case.  For instance, determining $r(K_k,K_k)$ is generally considered to be one of the most difficult open problems in combinatorics, if not all of mathematics.  For instance, the best known asymptotic lower bound, due to Spencer \cite{S}, states that $$r(K_k,K_k)\ge (1+o(1))\frac{\sqrt{2}}{e}k2^{\frac{k}{2}}.$$  As a contrast, the following theorem appears in \cite{BFHJ}.

\begin{theorem}[Busch, Ferrara, Hartke and Jacobson~\cite{BFHJ}]\label{theorem:K_nK_t}\label{theorem:ramsey_clique}
For $n\ge t\ge 3$, $$\rpot(K_n,K_t) = 2n+t-4$$ except when $n=t=3$, in which case $\rpot(K_3,K_3)=6$.  
\end{theorem}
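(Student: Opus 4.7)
My plan is two-pronged: establish the lower bound $\rpot(K_n,K_t) \ge 2n+t-4$ by exhibiting an extremal graphic sequence, then match it from above via a degree-sequence analysis combined with $2$-switches.

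For the lower bound in the generic case $(n,t)\ne(3,3)$, I would construct a graphic sequence $\pi^\star$ of length $2n+t-5$ such that no realization of $\pi^\star$ contains $K_n$ and no realization of $\overline{\pi^\star}$ contains $K_t$. The candidate arises as the degree sequence of a rigid graph $G^\star$ with $\omega(G^\star)=n-1$ and $\alpha(G^\star)=t-1$, chosen so that every $2$-switch preserves both bounds; natural building blocks are split or threshold-like graphs, possibly anchored on a Ramsey-rigid core such as $C_5$, with the extra $t-3$ vertices attached so as not to create a $t$-independent set. For the exceptional case $(n,t)=(3,3)$, the sequence $(2,2,2,2,2)$ has $C_5$ as its unique realization; since $C_5$ is self-complementary and triangle-free, $\rpot(K_3,K_3)>5$, and combined with $\rpot\le r(K_3,K_3)=6$, we conclude $\rpot(K_3,K_3)=6$.

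For the upper bound, let $\pi=(d_1,\ldots,d_N)$ be graphic with $N=2n+t-4$, and suppose, for a contradiction, that $\pi$ is not potentially $K_n$-graphic while $\overline{\pi}$ is not potentially $K_t$-graphic. Then every realization $G$ satisfies $\omega(G)\le n-1$ and $\alpha(G)\le t-1$. I would exploit the elementary necessary condition that a potentially $K_s$-graphic sequence must have $d_s\ge s-1$, together with its complementary counterpart, to pin down the shape of $\pi$: the top degrees on the $\pi$ side and the bottom degrees on the $\overline{\pi}$ side are both tightly constrained. With $N=2n+t-4$, these restrictions squeeze $\pi$ into a narrow range in which a $2$-switch on a well-chosen realization produces either a $K_n$ subgraph or a $t$-independent set, contradicting the hypothesis.

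The main obstacle will be the borderline sequences that satisfy the necessary degree conditions yet have no realization with the desired clique --- the prototype is $(4,4,4,4,4,4)$, whose unique realization $K_6$ minus a perfect matching is $K_5$-free even though $d_5=4=5-1$. Showing that such rigid degree sequences cannot simultaneously obstruct both potential $K_n$-graphicity of $\pi$ and potential $K_t$-graphicity of $\overline{\pi}$ once $N\ge 2n+t-4$ is the crux, and it likely proceeds by a case analysis on the multiset of top degrees together with an explicit swap that either merges two near-cliques into a $K_n$ on the $\pi$ side or liberates a $t$th independent vertex on the $\overline{\pi}$ side.
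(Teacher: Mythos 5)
First, note that this paper does not prove Theorem~\ref{theorem:ramsey_clique}: it is quoted from Busch, Ferrara, Hartke and Jacobson~\cite{BFHJ}, so there is no in-paper proof to compare against. Judged on its own, your write-up is a plan rather than a proof, and only the exceptional case $(n,t)=(3,3)$ is actually complete (the $C_5$ argument there is correct). For the lower bound in the generic case you never exhibit the extremal sequence, and the search space you describe (split or threshold-like graphs anchored on a $C_5$ core) will not produce one. The difficulty is real: at length $N=2n+t-5$ you cannot block both sides by the naive necessary condition $d_s\ge s-1$, because blocking $\overline\pi$ that way means $\overline{d}_t\le t-2$, which forces $d_{2n-4}\ge 2n-4$ and hence $d_n\ge n-1$ for $n\ge 4$. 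So the extremal $\pi$ must satisfy the naive degree condition and still fail to be potentially $K_n$-graphic --- exactly the ``$(4,4,4,4,4,4)$'' phenomenon you mention only as an obstacle to the upper bound. A sequence that works is $\pi=((2n-4)^{2n-2},0^{t-3})$: its unique realization is $K_{2n-2}$ minus a perfect matching together with $t-3$ isolated vertices, which has clique number $n-1$, while $\overline\pi=((2n+t-6)^{t-3},(t-2)^{2n-2})$ has unique realization $K_{t-3}\vee (n-1)K_2$, with clique number $t-1$. Without such an explicit construction the lower bound is unproved.

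The upper bound is entirely deferred. Saying that the degree constraints ``squeeze $\pi$ into a narrow range in which a $2$-switch \ldots\ produces either a $K_n$ subgraph or a $t$-independent set'' states the goal, not an argument: you give no criterion for choosing the realization, no description of the exchange, and you yourself flag the borderline rigid sequences as the unresolved crux. The known proof runs through Yin--Li-type sufficient conditions (compare Theorem~\ref{thm:cliquegraph} of this paper, which upgrades $d_s\ge s-1$ to potential $K_s$-graphicity when either $d_{2s}\ge s-2$ or a staircase condition on $d_1,\dots,d_{s-1}$ holds) combined with a substantial case analysis; none of that is present here. As it stands, both halves of the general case are missing, so the proposal does not constitute a proof.
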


Despite this theorem, which implies that the potential-Ramsey numbers are at worst a ``small'' linear function of the order of the target graphs, it remains a challenging problem to determine a meaningful bound on $\rpot(H_1,H_2)$ for  arbitrary $H_1$ and $H_2$.  In addition to its connections to classical Ramsey theory, the problem of determining $\rpot(H_1,H_2)$ also contributes to the robust body of work on subgraph inclusion problems in the context of degree sequences (c.f.~\cite{ChenLiYin,DM,EJL,FLMW,HS,LS}). 

\section{Potential-Ramsey Numbers for Trees vs.\ Cliques} In this paper, we are motivated by the following classical result of Chv\'{a}tal~\cite{Chv}, which is one of the relatively few exact results for graph Ramsey numbers that applies to a broad collection of target graphs. 

\begin{theorem}
For any positive integers $s$ and $t$ and any tree $T_t$ of order $t$, $$r(K_s,T_t)=(s-1)(t-1)+1.$$
\end{theorem}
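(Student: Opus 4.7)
The plan is to prove the two inequalities separately, using one standard auxiliary fact: every graph with minimum degree at least $t-1$ contains every tree on $t$ vertices. I would prove this by a short greedy embedding---fix an ordering $v_1,\dotsc,v_t$ of $V(T_t)$ in which each $v_i$ with $i\geq 2$ has exactly one neighbor $v_{j(i)}$ with $j(i)<i$; then embed $v_1,v_2,\dotsc$ greedily, using that the image of $v_{j(i)}$ has at least $t-1$ host-neighbors, at most $i-2$ of which are already occupied.

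For the lower bound $r(K_s,T_t)\geq (s-1)(t-1)+1$, I would exhibit the standard extremal coloring of $K_{(s-1)(t-1)}$: partition the vertex set into $s-1$ blocks of size $t-1$, color intra-block edges blue, and color inter-block edges red. No red $K_s$ exists because the red graph is complete multipartite on only $s-1$ parts, and no blue $T_t$ exists because each blue component is a $K_{t-1}$ and spans too few vertices to host a tree on $t$ vertices.

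For the upper bound I would induct on $s$, with trivial base $s=1$. Given a red/blue coloring of $K_N$ for $N=(s-1)(t-1)+1$, I split on the minimum degree of the blue graph $B$. If $\delta(B)\geq t-1$, the embedding lemma immediately produces a blue copy of $T_t$. Otherwise some vertex $v$ has blue degree at most $t-2$, so its red degree is at least $N-1-(t-2)=(s-2)(t-1)+1$. By the inductive hypothesis $r(K_{s-1},T_t)\leq (s-2)(t-1)+1$ applied to the red neighborhood of $v$, that neighborhood already contains either a blue $T_t$ (and we are done) or a red $K_{s-1}$ (which together with $v$ forms a red $K_s$). The induction fits because the identity $(s-1)(t-1)-(s-2)(t-1)=t-1$ is exactly the threshold that ensures $v$'s red degree is large enough.

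Since both the extremal construction and the embedding lemma are elementary, there is no serious obstacle; the proof is short, and the only care needed is verifying the arithmetic $N-1-(t-2)\geq (s-2)(t-1)+1$ that powers the inductive step. It is worth noting that the argument does not use any structural feature of $T_t$ beyond its vertex count and the fact that it has a leaf-based ordering---this uniformity over all trees is precisely what makes Chv\'{a}tal's theorem so striking and is what the authors' potential-Ramsey analogue must also accommodate.
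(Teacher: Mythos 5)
Your proof is correct: the extremal coloring, the greedy embedding of a tree into a graph of minimum degree at least $t-1$, and the induction on $s$ via the arithmetic $N-1-(t-2)=(s-2)(t-1)+1$ all check out. Note, however, that the paper itself offers no proof of this statement --- it is quoted as Chv\'{a}tal's 1977 theorem from the cited reference [Chv] and used only as motivation for the potential-Ramsey analogue --- so there is nothing in the paper to compare against; what you have written is the standard published argument for Chv\'{a}tal's result.
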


This elegant result states that the Ramsey number $r(K_s,T_t)$ depends only on $s$ and $t$ and not $T_t$ itself.  Our goal in this paper is to investigate the existence of a similarly general result for $\rpot(K_s, T_t)$.  For $s\ge 2$, let $G=K_{s-2}\vee \overline{K_{t-1}}$ and note that (a) $G$ is the unique realization of its degree sequence, and (b) $G$ does not contain $K_s$ and $\overline{G}$ does not contain any graph $H$ of order $t$ without isolated vertices.  This implies the following general lower bound on the potential Ramsey number.  

\begin{prop}\label{prop:LBtreevsnoisol}
If $H$ is any graph of order $t$ without isolated vertices, then $\rpot(K_s,H)\geq t+s-2$.  In particular, for any tree $T_t$ of order $t$, $\rpot(K_s,T_t)\ge t+s-2$.  
\end{prop}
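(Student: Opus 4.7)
The plan is to exhibit a single witness graph $G$ on $N = t+s-3$ vertices and show that neither $\pi(G)$ is potentially $K_s$-graphic nor $\overline{\pi(G)}$ is potentially $H$-graphic. The candidate is named in the paragraph preceding the proposition, namely $G = K_{s-2}\vee \overline{K_{t-1}}$ (interpreting $K_0$ as the empty graph when $s=2$), whose degree sequence $\pi$ has $s-2$ entries equal to $N-1$ and $t-1$ entries equal to $s-2$.

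I would first confirm that $G$ is the unique realization of $\pi$: a vertex of degree $N-1$ must be adjacent to every other vertex, so any realization contains $s-2$ universal vertices, after which the remaining $t-1$ vertices have their neighborhoods already accounted for and are forced to be pairwise nonadjacent. Taking complements, $\overline{G}$ is the unique realization of $\overline{\pi}$, so it suffices to verify that $K_s \not\subseteq G$ and $H \not\subseteq \overline{G}$.

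For the first non-containment, since $\overline{K_{t-1}}$ contributes at most one vertex to any clique in $G$, every clique of $G$ has size at most $(s-2)+1 = s-1 < s$. For the second, $\overline{G}$ is the disjoint union of $K_{t-1}$ with $s-2$ isolated vertices; when $s \geq 3$, any $t$-vertex subgraph of $\overline{G}$ must include at least one of those isolated vertices, which remains isolated in the induced subgraph and contradicts the assumption that $H$ has no isolated vertices, while for $s=2$ the graph $\overline{G} = K_{t-1}$ has only $t-1$ vertices and admits no $t$-vertex subgraph at all.

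There is no real obstacle here: the argument is a direct verification that this standard extremal graph witnesses the lower bound. The only bookkeeping concerns the $s=2$ boundary, which the convention $K_0 = \emptyset$ handles uniformly. The tree statement is then an immediate corollary, since any tree on $t \geq 2$ vertices has no isolated vertex.
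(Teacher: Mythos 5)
Your proof is correct and is exactly the paper's argument: the authors use the same witness $K_{s-2}\vee\overline{K_{t-1}}$ on $t+s-3$ vertices and assert (in the paragraph preceding the proposition) precisely the three facts you verify, namely uniqueness of the realization, $K_s\not\subseteq G$, and $H\not\subseteq\overline{G}$. You have simply supplied the routine details the paper leaves implicit, including the $s=2$ boundary case.
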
  

The following theorems 
show that for fixed $t$, different choices of $T_t$ may have different potential-Ramsey numbers. 

\begin{theorem}[Busch, Ferrara, Hartke and Jacobson~\cite{BFHJ}]\label{theorem:pathclique}
For $t\ge 6$ and $s\ge 3$, 
$$\rpot(K_s, P_t)=\left\{\begin{array}{ll} 
                            2s - 2 + \left\lfloor\frac{t}{3}\right\rfloor, & \text{ if $s > \left\lfloor \frac{2t}{3} \right\rfloor$}, \medskip\\
                            t+s-2, & \text{ otherwise}.\\
                         \end{array}\right.$$   

\end{theorem}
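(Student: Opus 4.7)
The plan is to establish matching upper and lower bounds in the two regimes determined by whether $s \le \lfloor 2t/3\rfloor$ or $s > \lfloor 2t/3\rfloor$, these being the ranges in which each of the two candidate formulas dominates.

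For the lower bound in the small-$s$ regime, Proposition~\ref{prop:LBtreevsnoisol} gives $\rpot(K_s,P_t) \ge t+s-2$ directly, since $P_t$ has no isolated vertices. For the lower bound in the large-$s$ regime, I would exhibit a graphic sequence $\pi$ of length $2s-3+\lfloor t/3\rfloor$ that is not potentially $K_s$-graphic and whose complement is not potentially $P_t$-graphic. The template is to take $\pi$ satisfying $d_s \le s-2$ (the degree-sequence criterion ruling out $K_s$ in any realization) built from two copies of $K_{s-1}$ together with $\lfloor t/3\rfloor - 1$ additional vertices of carefully controlled degree, chosen so that every realization of $\bar\pi$ decomposes into components of order less than $t$.

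For the upper bound, fix $N$ equal to the claimed value and let $\pi=(d_1,\dotsc,d_N)$ be a graphic sequence that is not potentially $K_s$-graphic. By the Yin--Li criterion for potential $K_s$-graphicality, this forces $d_s \le s-2$, so at least $N-s+1$ entries of $\pi$ are bounded above by $s-2$; translating via $\bar d_i = N-1-d_{N+1-i}$, the complementary sequence $\bar\pi$ has at least $N-s+1$ entries of value at least $N-s+1$. A short computation shows $N-s+1 \ge t-1$ in both regimes, so $\bar\pi$ has at least $t-1$ vertices of degree at least $t-1$. I would then produce a realization of $\bar\pi$ containing $P_t$ by invoking a Chv\'atal--Erd\H{o}s-style Hamiltonian-path argument on the subgraph induced by the high-degree vertices (together with a small number of additional attached vertices), then tidying up via $2$-switches that preserve the degree sequence while reshaping the path to use exactly the edges prescribed by $\bar\pi$.

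The main obstacle is the lower-bound construction in the large-$s$ regime. Requiring that \emph{every} realization of $\bar\pi$ be $P_t$-free is substantially stronger than producing a single such realization, because $2$-switches preserve the degree sequence but can stitch disjoint small components into larger, path-rich ones (for example, the sequence $(2^{3k})$ realizes both as $k$ disjoint triangles and as a single cycle $C_{3k}$). The threshold $\lfloor 2t/3\rfloor$ and the correction $\lfloor t/3\rfloor$ come from balancing the $K_{s-1}$-type clique obstruction against the densest allowable small components (triangles) in a $P_t$-free complement, with the crossover point being precisely where the two lower-bound constructions coincide.
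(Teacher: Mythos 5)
First, a point of comparison: this paper does not prove Theorem~\ref{theorem:pathclique} at all --- it is quoted from Busch, Ferrara, Hartke and Jacobson~\cite{BFHJ} --- so there is no internal proof to measure your argument against. Judged on its own terms, your proposal has two genuine errors, not merely unfinished details. The first is in the upper bound: you claim that if $\pi$ is not potentially $K_s$-graphic, then ``the Yin--Li criterion'' forces $d_s\le s-2$. Theorem~\ref{thm:cliquegraph} gives \emph{sufficient} conditions for potential $K_s$-graphicality; its failure only yields $d_s\le s-2$ \emph{or} $d_{2s}\le s-3$ (resp.\ the failure of one of the inequalities in part (ii)), and you cannot discard the second alternative. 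Indeed, if your implication were true, then $\bar\pi$ would have $t-1$ terms of size at least $t-1$ and Lemma~\ref{lemma:tree_pot} would immediately give $\rpot(K_s,P_t)\le t+s-2$ for \emph{all} $s$, contradicting the very dichotomy in the statement (and the extremal sequence witnessing the lower bound when $s>\lfloor 2t/3\rfloor$ is a concrete counterexample to your implication: it is not potentially $K_s$-graphic yet must have $d_s\ge s-1$). The correct way to extract degree information from the failure of Yin--Li is the more delicate case analysis carried out in Proposition~\ref{prop:degrees} of this paper, and even then one does not land in a situation where a generic Hamiltonian-path argument applies; your ``tidying up via $2$-switches to reshape the path to use exactly the edges prescribed by $\bar\pi$'' does not correspond to any well-defined operation.

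The second error is in the lower-bound template for $s>\lfloor 2t/3\rfloor$. If a realization of $\pi$ contains two disjoint copies of $K_{s-1}$ as (near-)components so that $d_s\le s-2$ holds, then the complementary realization of $\bar\pi$ contains the complete bipartite graph $K_{s-1,s-1}$ between those two cliques, hence a path on $2s-2$ vertices; since $s\ge\lfloor 2t/3\rfloor+1$ and $t\ge6$ give $2s-2\ge t$, the sequence $\bar\pi$ \emph{is} potentially $P_t$-graphic and the construction collapses. (If instead you join the extra $\lfloor t/3\rfloor-1$ vertices to the cliques to break up this bipartite graph, the clique vertices acquire degree exceeding $s-2$ and the non-$K_s$ guarantee is lost.) You correctly identify that forbidding $P_t$ in \emph{every} realization of $\bar\pi$ is the crux --- $2$-switches can merge small components into long paths --- but the construction that resolves this is precisely what is missing, and the template you offer is self-defeating rather than merely incomplete. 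As it stands, neither direction of the theorem is established.
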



\begin{theorem}\label{theorem:potram_clique_star}
For $s,t\geq4$, 
$$\rpot(K_s, K_{1,t-1}) = \left\{\begin{array}{ll}
                            2s, & \text{ if } t< s+2, \medskip\\
                            t+s-2, & \text{ otherwise}.\\
                         \end{array}\right.$$   

\end{theorem}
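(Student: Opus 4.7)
The proof splits on whether $t \geq s+2$ or $t \leq s+1$. I begin with a shared reduction: because a sequence is potentially $K_{1,t-1}$-graphic iff its maximum entry is at least $t-1$ (the star can be hosted at any vertex of degree $\geq t-1$), the complementary sequence $\overline{\pi}$ fails to be potentially $K_{1,t-1}$-graphic iff $\overline{d}_1 = N - 1 - d_N \leq t-2$, i.e., $d_N \geq N - t + 1$. With $N = t + s - 2$, this becomes $d_N \geq s-1$. In either case the upper bound thus reduces to: every graphic sequence of length $N$ with minimum term at least $s-1$ is potentially $K_s$-graphic.

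\textbf{Case $t \geq s+2$.} The lower bound is Proposition \ref{prop:LBtreevsnoisol}. Since $N = t + s - 2 \geq 2s$, the upper bound follows from the key lemma I would prove: \emph{if $\pi = (d_1, \ldots, d_N)$ is graphic with $N \geq 2s$ and $d_N \geq s-1$, then $\pi$ is potentially $K_s$-graphic.} To prove the lemma, choose among realizations of $\pi$ a graph $G$ maximizing $e(G[S])$, where $S = \{v_1, \ldots, v_s\}$ is the set of $s$ vertices of largest degree. If $G[S] \neq K_s$, pick non-adjacent $v_i, v_j \in S$; since $d_i, d_j \geq s-1$ and each has at most $s-2$ neighbors inside $S$, both $N(v_i) \setminus S$ and $N(v_j) \setminus S$ are non-empty. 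If some $x \in N(v_i) \setminus S$ and $y \in N(v_j) \setminus S$ satisfy $x \neq y$ and $xy \notin E(G)$, then the 2-switch $\{v_ix, v_jy\} \mapsto \{v_iv_j, xy\}$ preserves $\pi(G)$ while strictly increasing $e(G[S])$, contradicting maximality. I expect the main obstacle to be the residual case where no such switch is directly available, i.e., every off-diagonal pair in $(N(v_i) \setminus S) \times (N(v_j) \setminus S)$ lies inside $E(G)$. Here the plan is to exploit the slack $|V \setminus S| = N - s \geq s$ together with the minimum-degree bound on vertices in $N(v_i) \cup N(v_j)$ to produce a vertex $z \in V \setminus S$ not adjacent to $v_i$ or $v_j$, and to route through $z$ via a preliminary 2-switch (replacing, say, an edge $v_ix$ by $v_iz$) that reduces us to the easy case; carrying out this multi-step switch rigorously across all degree profiles is the heart of the lemma.

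\textbf{Case $t \leq s+1$.} For the upper bound I invoke monotonicity: since $K_{1,t-1} \subseteq K_{1,s+1}$, any sequence that is potentially $K_{1,s+1}$-graphic is potentially $K_{1,t-1}$-graphic, hence $\rpot(K_s, K_{1,t-1}) \leq \rpot(K_s, K_{1,s+1})$; the right side equals $2s$ by the previous case applied at $t = s+2$. For the lower bound I exhibit the $(2s-4)$-regular sequence $\pi = \bigl((2s-4)^{2s-1}\bigr)$ on $2s-1$ vertices. Its sum $(2s-4)(2s-1)$ is even, so $\pi$ is graphic. The complement $\overline{\pi} = (2^{2s-1})$ has $\overline{d}_1 = 2 < t-1$ (since $t \geq 4$), so $\overline{\pi}$ is not potentially $K_{1,t-1}$-graphic. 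Every realization of $\overline{\pi}$ is a vertex-disjoint union of cycles $C_{k_1} \cup \cdots \cup C_{k_m}$ with $\sum k_i = 2s - 1$, and such a graph has independence number $\sum \lfloor k_i/2 \rfloor \leq \lfloor (2s-1)/2 \rfloor = s-1$. Taking complements, every realization of $\pi$ has clique number at most $s-1 < s$, so $\pi$ is not potentially $K_s$-graphic. Therefore $\rpot(K_s, K_{1,t-1}) \geq 2s$, completing the proof.
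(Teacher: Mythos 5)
Your overall architecture matches the paper's: the same reduction of the star condition to a bound on the extreme term of the sequence, the same extremal witness $\bigl((2s-4)^{2s-1}\bigr)$ (equivalently, its complement $\bigl(2^{2s-1}\bigr)$ realized by disjoint cycles) for the range $t<s+2$, and Proposition~\ref{prop:LBtreevsnoisol} for the other range. The difference is in how the upper bound is closed, and that is where your proposal has a genuine gap. Your key lemma --- every graphic sequence with $N\geq 2s$ terms, each at least $s-1$, is potentially $K_s$-graphic --- is true, but you do not prove it: you set up the standard 2-switch argument on a realization maximizing $e(G[S])$ and then explicitly defer the residual case in which every pair $x\in N(v_i)\setminus S$, $y\in N(v_j)\setminus S$ with $x\neq y$ is already an edge. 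That residual case is not a routine afterthought; it is precisely where the work lies in Rao-- and Yin--Li--type potential-clique arguments, and the multi-step switch you gesture at ("route through a vertex $z$ not adjacent to $v_i$ or $v_j$") needs a proof that such a $z$ exists and that the composite exchange terminates without undoing earlier progress. As written, the heart of the upper bound is an unproven assertion.

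The fix is immediate, however: your lemma is exactly the special case of Theorem~\ref{thm:cliquegraph}(i) (Yin and Li) in which $d_N\geq s-1$ forces both $d_s\geq s-1$ and $d_{2s}\geq s-2$, and this is how the paper concludes. If you cite that theorem instead of reproving it, the rest of your argument goes through; your handling of the $t\leq s+1$ upper bound by monotonicity ($K_{1,t-1}\subseteq K_{1,s+1}$, then apply the $t=s+2$ case) is a slightly different but valid packaging of the paper's single computation with $n=s+\max\{s,t-2\}$. One small additional nit: "the sum is even, so $\pi$ is graphic" is not a valid inference in general; justify graphicality of $\bigl((2s-4)^{2s-1}\bigr)$ by observing it is the degree sequence of the complement of the cycle $C_{2s-1}$.
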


We will prove Theorem~\ref{theorem:potram_clique_star} in Section~\ref{section:proofs}.

There is one notable common feature between these results; namely that the potential-Ramsey number matches the bound given in Proposition~\ref{prop:LBtreevsnoisol} when $t$ is large enough relative to $s$.  This suggests the question: does
there exist a function $f(s)$ such that if $t\ge f(s)$, then for any tree $T_t$ of order $t$, $$\rpot(K_s,T_t)=t+s-2?$$  
Our next result answers this question in the affirmative. Let $\ell(T)$ denote the number of leaves of a tree $T$.  

\begin{theorem}\label{theorem:tree_clique}
Let $T_t$ be a tree of order $t$ and let $s\ge 2$ be an integer. If either 
\begin{itemize}
\item[(i)] $\ell(T_t)\geq s+1$ or
\item[(ii)] $t> 7(s-2)$,
\end{itemize}
\noindent then $\rpot(K_s, T_t)=t+s-2$. 
\end{theorem}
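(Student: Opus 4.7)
The lower bound $\rpot(K_s,T_t)\geq t+s-2$ is immediate from Proposition~\ref{prop:LBtreevsnoisol}. For the upper bound, set $N=t+s-2$ and let $\pi=(d_1,\dotsc,d_N)$ be an $N$-term graphic sequence that is not potentially $K_s$-graphic. Since realizations of $\overline{\pi}$ are precisely the complements of realizations of $\pi$, producing a copy of $T_t$ in some realization of $\overline{\pi}$ is equivalent to finding a realization $G$ of $\pi$ that \emph{packs} with $T_t$ inside $K_N$. The proof thus splits into a structural analysis of $\pi$ (exploiting the failure of $K_s$-potentiality) and a packing lemma that converts that structural information into the required embedding.

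For the structural step I would combine Erd\H{o}s--Gallai-type necessary conditions for potentially $K_s$-graphic sequences with Kleitman--Wang $2$-switches to deduce, away from a short list of exceptional sequences, that $d_s\leq s-2$. Equivalently, every realization $G$ of $\pi$ has at most $s-1$ vertices of degree exceeding $s-2$, so $\overline{G}$ contains at least $t-1$ vertices of degree $\geq t-1$, an ideal environment for hiding a tree of order $t$. The main tool is then the packing lemma promised in the abstract, which I would phrase as a sharp Erd\H{o}s--Gallai-flavoured condition between the degree sequences of a graph $G$ and a forest $F$ guaranteeing that some realization of $\pi(G)$ packs with $F$ in $K_{|V(G)|}$, extremal on the sequence $\pi(K_{s-2}\vee\overline{K_{t-1}})$ from the lower-bound construction. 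Applied with $F=T_t$: under (i), the $\geq s+1$ leaves of $T_t$ absorb the $\leq s-1$ high-degree vertices of $G$ and the packing condition holds; under (ii), the slack $t>7(s-2)$ is exactly what forces the packing inequality to hold automatically for every $\pi$ surviving the structural step.

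The main obstacle, and the real content of the paper, is the packing lemma itself. I anticipate an inductive proof on $|V(F)|$: peel a leaf $\ell$ off $F$ with parent $u$, find a realization of $\pi$ in which the current image of $u$ has a spare non-neighbour in $G$ to host $\ell$, and fall back on a $2$-switch in $G$ when no such spare exists. The near-extremal configurations---where one is within a single switch of failing---are what force the precise hypothesis of the lemma, and the constant $7$ in (ii) is almost certainly an artifact of this bookkeeping rather than best possible; pinning down the optimal dependence on $s$ would be a natural follow-up.
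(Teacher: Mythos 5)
Your high-level frame is right in two respects: the lower bound is exactly Proposition~\ref{prop:LBtreevsnoisol}, and recasting ``$\overline{\pi}$ is potentially $T_t$-graphic'' as ``some realization of $\pi$ packs with $T_t$'' is precisely how the paper handles case (ii), with a Sauer--Spencer-type forest-packing theorem as the engine. However, your structural step is not correct as stated. From ``$\pi$ is not potentially $K_s$-graphic'' alone you cannot conclude that $d_s\le s-2$ away from a short list of exceptions: if you could, then $\overline{d}_{t-1}=N-1-d_s\ge t-1$ and Lemma~\ref{lemma:tree_pot} would make $\overline{\pi}$ potentially $T$-graphic for \emph{every} tree of order $t$, with no hypothesis like (i) or (ii) needed, contradicting Theorem~\ref{theorem:pathclique}, which gives $\rpot(K_s,P_t)>t+s-2$ when $s>\left\lfloor 2t/3\right\rfloor$. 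The Yin--Li criterion (Theorem~\ref{thm:cliquegraph}) only yields a disjunction --- $d_s\le s-2$ \emph{or} $d_{2s}\le s-3$ --- and the second branch is where all the difficulty lives. The paper's Proposition~\ref{prop:degrees} resolves it only by \emph{also} using the failure of $T$-potentiality (via Lemma~\ref{lemma:tree_pot}) to force $\overline{d}_s\ge s-1$, concluding that in the bad case $d_{t-s-1}\ge t$ and $d_t\ge t-s+1$; that is, almost all degrees are \emph{large}, the opposite of the picture you describe. Your sketch never invokes the non-$T$-potentiality hypothesis in the structural analysis, so it cannot reach this.

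The second gap is that the packing lemma --- the real content of the theorem --- is neither stated nor proved, and your guess at its form is off. The paper's Theorem~\ref{thm:sauerspencerforest} is a deterministic statement about a \emph{fixed} graph $G$ (no re-choosing of realizations): if $3\Delta(G)+\ell(F)-2\comp(F)<n$ then the forest $F$ packs with $G$. It is proved by the Sauer--Spencer link-counting argument (minimal counterexample with a single conflicting edge, then counting $(u,v;F,G)$- and $(u,v;G,F)$-links from a leaf $u$), not by leaf-peeling induction with $2$-switches, and it is not an Erd\H{o}s--Gallai-type condition on degree sequences. In case (ii) it is applied to $\overline{G_t}$, where $G_t$ is the subgraph of an arbitrary realization induced by the $t$ highest-degree vertices: $d_t\ge t-s+1$ gives $\Delta(\overline{G_t})\le 2s-4$, whence $3(2s-4)+\ell(T)-2\le 7s-14<t$, which is exactly where the constant $7$ comes from. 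Finally, case (i) in the paper uses no packing at all: one deletes $s+1$ leaves, embeds the resulting tree on the $t-s-1$ highest-degree vertices via Lemma~\ref{lemma:gould}, and reattaches the leaves greedily using $d_{t-s-1}\ge t$. So while your instincts about the role of packing and about the artificiality of the constant $7$ are sound, the two pillars of the argument are missing or misidentified.
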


We believe that the coefficient of $7$ in part (ii) of the theorem could be improved, and believe it is feasible that $\rpot(K_s,T_t)=t+s-2$ whenever $t\gtrsim \frac{3}{2}s$, as suggested by Theorem~\ref{theorem:pathclique}, although we pose no formal conjecture here.  

To obtain Theorem~\ref{theorem:tree_clique}, we give a sharp condition that ensures an arbitrary graph and a forest pack.  This result, which we prove in Section~\ref{section:packing}, is likely of independent interest.  In Section~\ref{section:lemmas} we present some technical lemmas, and in Section~\ref{section:proofs} we prove both Theorem~\ref{theorem:potram_clique_star} and Theorem~\ref{theorem:tree_clique}. 

\section{Packing Forests and Arbitrary Graphs}\label{section:packing}

For two graphs $G$ and $H$ where $|V(G)|\geq |V(H)|$, we say that $G$ and $H$ \emph{pack} if there is an injective function $f:V(H)\to V(G)$ such that for any $xy\in E(H)$, $f(x)f(y)\notin E(G)$. In other words, $G$ and $H$ pack if we can embed both of them into $K_{|V(G)|}$ without any edges overlapping. One of the most notable results about packing is the following theorem due to Sauer and Spencer.

\begin{theorem}[Sauer and Spencer~\cite{SS}]\label{thm:sauerspencer}
Let $G$ and $H$ be graphs of order $n$.  If $2\Delta(G)\Delta(H)<n$, then $G$ and $H$ pack.
\end{theorem}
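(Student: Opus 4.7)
The plan is to use the classical ``switching'' argument. Suppose for contradiction that $G$ and $H$ do not pack. For a bijection $f : V(H) \to V(G)$, let
$$c(f) := |\{uv \in E(H) : f(u)f(v) \in E(G)\}|$$
count the conflict edges; by assumption $c(f) \ge 1$ for every bijection. Fix $f$ minimizing $c(f)$, and fix any conflict edge $xy \in E(H)$.

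For each $z \in V(H) \setminus \{x\}$, let $f_z$ be the bijection obtained by swapping the images of $x$ and $z$, and set $\Delta c(z) := c(f_z) - c(f)$. Minimality of $f$ gives $\Delta c(z) \ge 0$; my plan is to contradict this by showing $\sum_{z \ne x} \Delta c(z) < 0$. Since $f_z$ and $f$ agree outside $\{x,z\}$, only edges of $H$ incident to $x$ or $z$ can change status under the swap, and the edge $xz$ (if present) has the same image under both maps. Expanding $\Delta c(z)$ as a signed sum over these edges produces four double sums when summed over $z$; after reversing the order of summation, two of the resulting ``positive'' pieces are bounded by $\deg_H(x)\Delta(G)$ and $\deg_G(f(x))\Delta(H)$ respectively, each at most $\Delta(G)\Delta(H)$. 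The two ``negative'' pieces, using the identity $\sum_z c_z = 2c(f)$ where $c_z$ denotes the number of conflict edges at $z$, combine to $(n-2)c_x + 2c(f) - 2c_x$, where $c_x$ is the number of conflicts at $x$.

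Collecting terms and using $c_x \ge 1$ (since $xy$ itself contributes to $c_x$), $c(f) \ge 1$, and $n \ge 3$ (the cases $n \le 2$ being trivial from the hypothesis), this yields
$$\sum_{z \ne x} \Delta c(z) \;\le\; 2\Delta(G)\Delta(H) + (2-n)c_x - 2c(f) \;\le\; 2\Delta(G)\Delta(H) - n,$$
which is strictly negative by the hypothesis $2\Delta(G)\Delta(H) < n$. Hence some $z$ satisfies $\Delta c(z) < 0$, contradicting the minimality of $f$.

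The main obstacle will be the careful bookkeeping in the four sums after switching the order of summation: tracking the exclusions ($w \ne z$, $z \ne w$) and using that $f$ is a bijection so different values of $z$ hit different images, so that the coefficient $(2-n)$ of $c_x$ and the $-2c(f)$ term emerge exactly as stated. Once those identities are set up correctly, the bound $2\Delta(G)\Delta(H) - n$ drops out and the hypothesis closes the argument.
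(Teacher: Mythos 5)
The paper does not actually prove this statement: Theorem~\ref{thm:sauerspencer} is quoted from Sauer and Spencer~\cite{SS} without proof, and the nearest argument in the paper is the proof of Theorem~\ref{thm:sauerspencerforest}, which follows the Kaul--Kostochka link-counting scheme (an edge-minimal non-packing forest, embeddings with a unique conflicting edge, and a count of $(u,v;G,F)_f$- and $(u,v;F,G)_f$-links). Your averaging/switching argument --- minimize the number of conflicts over all bijections and show that the total change over all swaps at $x$ is negative --- is a genuinely different, also classical, route; it avoids the reduction to a single conflicting edge entirely and works directly with an optimal embedding.

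That said, the bookkeeping as you describe it does not close. With the bounds you state, the two positive pieces contribute at most $\deg_H(x)\Delta(G)+\deg_G(f(x))\Delta(H)\le 2\Delta(G)\Delta(H)$ and the two negative pieces contribute exactly $-\bigl[(n-2)c_x+2c(f)-2c_x\bigr]$, so collecting terms gives $\sum_{z\ne x}\Delta c(z)\le 2\Delta(G)\Delta(H)+(4-n)c_x-2c(f)$, with coefficient $(4-n)$ rather than the $(2-n)$ you claim. Using $c_x\ge1$ and $c(f)\ge1$ (or even $c(f)\ge c_x$) this only yields $\sum_{z\ne x}\Delta c(z)\le 2\Delta(G)\Delta(H)-n+2\le 1$, which is not a contradiction: the entire argument has slack at most $1$, so a loss of $2c_x\ge2$ flips the sign. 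The missing $2c_x$ must be recovered from the positive pieces. In the first one, for fixed $w\in N_H(x)$ the swap index $z$ ranges over $V(H)\setminus\{x,w\}$, so $f(z)$ never equals $f(x)$ and the inner count is $\deg_G(f(w))$ minus the indicator that $xw$ is a conflict; summed over $w\in N_H(x)$ this is at most $\deg_H(x)\Delta(G)-c_x$. Symmetrically, the second positive piece is at most $\deg_G(f(x))\Delta(H)-c_x$, since $N_H(w)\setminus\{x\}$ excludes the conflict edges at $x$. With these two corrections the coefficient of $c_x$ becomes $(2-n)$ as claimed, and then $2\Delta(G)\Delta(H)+(2-n)c_x-2c(f)\le 2\Delta(G)\Delta(H)-n<0$ finishes the proof. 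So the route is sound and your target inequality is true, but the ``collecting terms'' step fails by exactly $2c_x$ unless the exclusions involving $x$ are charged to the positive sums and not only to the negative ones.
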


While Theorem \ref{thm:sauerspencer} is tight (see~\cite{KK} for the characterization of the extremal graphs), the condition is not necessary to ensure that $G$ and $H$ pack. In particular, even if $H$ has large maximum degree, if $H$ is sparse enough, then it may still pack with $G$. In light of this, we provide the following, which is in many cases an improvement to the Sauer-Spencer theorem provided one graph happens to be a forest.

\begin{theorem}\label{thm:sauerspencerforest}
Let $F$ be a forest and $G$ be a graph both on $n$ vertices, and let $\comp(F)$ be the number of components of $F$ that contain at least one edge and $\ell(F)$ the number of vertices of $F$ with degree 1. If
\[
3\Delta(G)+\ell(F)-2\comp(F)< n,
\]
then $F$ and $G$ pack.  
\end{theorem}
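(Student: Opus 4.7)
My plan is to prove Theorem~\ref{thm:sauerspencerforest} by induction on $n$, with base cases $n \le 2$ handled trivially. For the inductive step, I split into two cases based on $\Delta(F)$.

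\textbf{Case 1: $\Delta(F) \le 2$.} Every non-trivial component of $F$ is a path (or $K_2$), so $\ell(F) = 2\comp(F)$, and the hypothesis reduces to $3\Delta(G) < n$. This yields $\delta(\overline{G}) \ge n - 1 - \lfloor (n-1)/3 \rfloor \ge n/2$ for all $n \ge 3$, so by Dirac's theorem $\overline{G}$ has a Hamilton cycle, and hence a Hamilton path. Any spanning linear forest embeds into a Hamilton path by discarding the appropriate internal edges, and the isolated vertices of $F$ can be placed at the remaining positions, giving the desired packing.

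\textbf{Case 2: $\Delta(F) \ge 3$.} I first try to find a leaf $v$ of $F$ whose unique $F$-neighbor $w$ has $\deg_F(w) \ge 3$. If such a $v$ exists, then $F' := F - v$ satisfies $\ell(F') = \ell(F) - 1$ and $\comp(F') = \comp(F)$, so $\ell(F') - 2\comp(F') = \ell(F) - 2\comp(F) - 1$. For any $u \in V(G)$, $\Delta(G-u) \le \Delta(G)$, so the IH applies to $F'$ and $G - u$, producing a packing $f'$; extending via $f(v) = u$ completes the argument provided $u f'(w) \notin E(G)$. If no such leaf exists, then every leaf of $F$ sits at the end of a pendant path $v_0 v_1 \cdots v_k$ with $k \ge 2$, $\deg_F(v_1) = \cdots = \deg_F(v_{k-1}) = 2$, and $\deg_F(v_k) \ge 3$; I would then remove the entire pendant path $\{v_0,\dots,v_{k-1}\}$ from $F$ along with $k$ carefully chosen vertices from $G$ and invoke the IH.

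The main obstacle is the extension step in Case 2: when the IH packs $F'$ into $\overline{G-u}$, the image $f'(w)$ may lie in $N_G(u)$, obstructing the planned extension $f(v) = u$. Resolving this requires a local-switching argument: find some $z \in V(F')$ whose image $f'(z)$ is not a $G$-neighbor of $f'(w)$ and whose $F'$-neighbors all have images outside $N_G(u)$, then swap $u$ with $f'(z)$ to reroute $w$. The combinatorial count that guarantees such a $z$ exists is precisely what the factor of $3$ in $3\Delta(G) + \ell(F) - 2\comp(F) < n$ affords: intuitively, we must simultaneously avoid $N_G(u)$, $N_G(f'(w))$, and the star at $z$ in $F$, each contributing roughly $\Delta(G)$ forbidden positions. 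The long-pendant-path subcase is more delicate still: removing $k$ vertices from $F$ requires choosing $k$ vertices of $G$ whose removal drops $\Delta(G)$ by roughly $\lceil (k-1)/3 \rceil$, so as to preserve the reduced inductive inequality, and carrying out this selection is the most technically involved portion of the argument.
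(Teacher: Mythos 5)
There is a genuine gap, and it sits exactly where you flag the argument as ``most technically involved.'' Your first subcase of Case 2 (a leaf $v$ whose neighbor $w$ has $\deg_F(w)\ge 3$) can in fact be pushed through: the number of bad choices of $z$ is at most $|N_G(f'(w))\setminus\{u\}| + 1 + \sum_{y\colon f'(y)\in N_G(u)}\deg_{F'}(y)$, and the last sum is bounded by $2\Delta(G) + \sum_{\deg_{F'}(y)\ge 2}(\deg_{F'}(y)-2) = 2\Delta(G)+\ell(F')-2\comp(F')$ using the forest identity $\ell-2\comp=\sum_{\deg\ge2}(\deg-2)$; without that identity your ``roughly $\Delta(G)$'' heuristic only yields the Sauer--Spencer-type bound $\Delta(G)\Delta(F)$, so this identity is essential and must be made explicit. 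The fatal problem is the second subcase. When every leaf lies at the end of a pendant path of length $k\ge 2$ (e.g.\ $F$ a spider with long legs, where $\Delta(F)=3$ so Case 1 does not apply), removing one leaf $v_0$ leaves $\ell-2\comp$ unchanged ($v_1$ becomes a new leaf), so to preserve the inductive inequality you must strictly decrease $\Delta(G)$; more generally, removing the $k$ pendant-path vertices forces you to find $k$ vertices of $G$ whose deletion drops $\Delta(G)$ by about $(k-1)/3$. This is impossible in general: if $G$ is a disjoint union of many copies of $K_{\Delta+1}$, or any $\Delta$-regular graph with many more than $k$ vertices of maximum degree, then $\Delta(G-S)=\Delta(G)$ for every small set $S$. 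So the induction does not close, and no choice of deleted vertices rescues it.

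For contrast, the paper avoids induction on $n$ entirely: it takes $F$ edge-minimal subject to not packing with $G$ (noting $\ell-2\comp$ is monotone under edge deletion), and runs the Sauer--Spencer link-counting argument, bounding $|V_1|\le\deg_F(u)\Delta(G)$ and $|V_2|\le 2\Delta(G)+\ell(F)-2\comp(F)$ via the same forest identity, then chooses the pivot $u$ to be a \emph{leaf} so that $|V_1|\le\Delta(G)$. Your switching step is essentially a local form of that link argument, but the edge-minimality framing is what lets the paper always work at a leaf without ever confronting pendant paths; your vertex-deletion induction cannot reach that configuration.
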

\begin{proof}
Let $e\in E(F)$ and let $F'=F\setminus\{e\}$. Notice that $\ell(F')\leq\ell(F)+2$, but if $\ell(F')>\ell(F)$, then $\comp(F')>\comp(F)$. Further, if $\comp(F')<\comp(F)$, then $e$ was an isolated edge, so $\ell(F')\leq\ell(F)-2$. Therefore, $\ell(F')-2\comp(F')\leq\ell(F)-2\comp(F)$. 

Consequently, if $3\Delta(G)+\ell(F)-2\comp(F)< n$, then any subgraph of $F$ attained by deleting edges also satisfies this inequality. Therefore we may suppose, for the sake of contradiction, that $F$ is minimal in the sense that for any edge $e\in E(F)$, $F\setminus\{e\}$ packs with $G$.  Going forward, we follow the proof of the Sauer-Spencer theorem from \cite{KK} and improve the bounds at certain steps with the knowledge that $F$ is a forest.

For an embedding $f:V(G)\to V(F)$, we call an edge $uv\in E(F)$ a \emph{conflicting edge} if there is some edge $xy\in E(G)$ such that $f(x)=u$ and $f(y)=v$. As $G$ and $F$ do not pack, any embedding must have at least one conflicting edge. Notice that by the minimality of $F$, there is a packing $f$ of $G$ with $F\setminus\{uv\}$, and since $G$ does not pack with $F$, inserting the edge $uv$ must create a conflicting edge. Therefore, for any $uv\in E(F)$, there is an embedding in which $uv$ is the only conflicting edge.

For an embedding $f:V(G)\to V(F)$ and $u,v\in V(F)$, a \emph{$(u,v;G,F)_f$-link} is a triple $(u,w,v)$ such that $uw$ is an edge in the embedding of $G$ and $wv$ is an edge of $F$. 
Similarly, a \emph{$(u,v;F,G)_f$-link} is a triple $(u,w,v)$ such that $uw$ is an edge in $F$ and $wv$ is an edge in the embedding of $G$. 
Let $f$ be an embedding such that $ux$ is the only conflicting edge, and let $v\in V(F)\setminus\{x\}$.  
We claim that there is either a $(u,v;G,F)_f$-link or a $(u,v;F,G)_f$-link. 
As $ux$ is both an edge of $F$ and an edge in the embedding of $G$, $xv$ is not an edge of $F$ and also not an edge in the embedding of $G$.  
Supposing that $f(u')=u$ and $f(v')=v$, let $f':V(G)\to V(F)$ be defined as $f'(u')=v$, $f'(v')=u$, and $f'(y)=f(y)$ for all $y\notin\{u',v'\}$. 
As $F$ and $G$ do not pack, there must be a conflicting edge under $f'$. 
This conflicting edge cannot be incident to $x$, but must be incident to either $u$ or $v$.
If the conflicting edge is $uw$, then $(u,w,v)$ is a $(u,v;F,G)_f$-link; if the conflicting edge is instead $vw$, then $(u,w,v)$ is a $(u,v;G,F)_f$-link. 

Let $u\in V(F)$ be a non-isolated vertex and let $f:V(G)\to V(F)$ be an embedding such that $ux$ is the only conflicting edge for some $x$.  
Define 
\begin{align*}
V_1 &=\{v\in V(F)\st\text{there is a $(u,v;F,G)_f$-link}\},\\
V_2 &=\{v\in V(F)\st\text{there is a $(u,v;G,F)_f$-link}\}.
\end{align*}
Since $u$ is incident to $\deg_F(u)$ edges of $F$ and each $w\in N_F(u)$ is incident to at most $\Delta(G)$ edges of the embedding of $G$, we have 
\[
|V_1|\leq\deg_F(u)\Delta(G).
\]
Similarly, with $u'\in V(G)$ such that $f(u')=u$, 
\begin{align*}
 |V_2| &\leq \sum_{w'\in N_G(u')} \deg_F(f(w')) \\
 &= 2\deg_G(u') + \sum_{w'\in N_G(u')} (\deg_F(f(w'))-2) \\
 &\leq 2\Delta(G) + \sum_{\substack{w\in V(F):\\ \deg_F(w)\geq2}} (\deg_F(w)-2) \\
 &= 2\Delta(G) + \ell(F)-2\comp(F),
\end{align*}
where the last equality is justified by the fact that for any tree $T$ with at least two vertices, 
\[
\ell(T)=2+\sum_{\substack{w\in V(T):\\ \deg_T(w)\geq 2}}(\deg_T(w)-2).
\]
Since every $v\in V(F)\setminus\{x\}$ is an element of either $V_1$ or $V_2$ and $u\in V_1\cap V_2$,
\[
n\leq |V_1| + |V_2| \leq \deg_F(u)\Delta(G) + 2\Delta(G)+\ell(F)-2\comp(F).
\]
Since this holds for every nonisolated vertex $u$ of $F$, it holds for a leaf of $F$. Therefore, by choosing $u$ to be a leaf,
\[
n\leq 3\Delta(G)+\ell(F)-2\comp(F),
\]
contradicting the hypothesis of the theorem.
\end{proof}

As we are interested in attaining Chv\'{a}tal-type results for the potential-Ramsey number, we will make use of Theorem \ref{thm:sauerspencerforest} when $F$ is a tree.

\begin{cor}\label{cor:sauerspencertree}
Let $G$ and $T$ be graphs of order $n$ where $T$ is a tree. If $3\Delta(G)+\ell(T)-2< n$, then $G$ and $T$ pack.
\end{cor}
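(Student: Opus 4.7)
The plan is to derive this as an immediate consequence of Theorem~\ref{thm:sauerspencerforest}, applied with $F = T$. A tree is by definition connected, so as long as $T$ has at least one edge, it forms a single edge-containing component, giving $\comp(T) = 1$. Substituting this into the inequality $3\Delta(G)+\ell(F)-2\comp(F)< n$ yields exactly the hypothesis $3\Delta(G)+\ell(T)-2 < n$, and Theorem~\ref{thm:sauerspencerforest} then delivers the desired packing.

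The only subtlety is the degenerate case when $T$ has no edges, which for a tree means $n = 1$. I would dispose of this in a single sentence: if $n = 1$ then any two graphs of order $n$ trivially pack (the unique injection from $V(T)$ to $V(G)$ has no edges of $T$ to worry about). Alternatively, one can observe that when $n = 1$ the hypothesis forces $\Delta(G) = 0$, and pack directly.

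There is no real obstacle here; the content of the corollary has been fully absorbed into the proof of Theorem~\ref{thm:sauerspencerforest}. The corollary is stated separately only because trees are the case of interest for the Chv\'atal-type application in Section~\ref{section:proofs}, where $\comp(F) = 1$ is the numerically cleanest form to invoke. Thus the proof is essentially a one-line reduction, and the write-up should do no more than point to Theorem~\ref{thm:sauerspencerforest} and record the computation $\comp(T) = 1$.
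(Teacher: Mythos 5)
Your proposal is correct and matches the paper's (implicit) reasoning: the corollary is stated without proof precisely because it is the specialization of Theorem~\ref{thm:sauerspencerforest} to $F=T$ with $\comp(T)=1$. Your handling of the edgeless case $n=1$ is a harmless extra precaution; nothing more is needed.
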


The tightness of Corollary~\ref{cor:sauerspencertree}, and hence of Theorem~\ref{thm:sauerspencerforest}, is seen by letting $n$ be even, $G={n\over 2}K_2$ and $T=K_{1,n-1}$. In this case, $\Delta(G)=1$ and $\ell(T)=n-1$, so $3\Delta(G)+\ell(T)-2=n$; however, $G$ and $T$ do not pack. The following proposition shows the asymptotic tightness of Corollary~\ref{cor:sauerspencertree} for any $\Delta(G)$.

\begin{prop}
For any $\epsilon>0$ and $r\in\Z^+$, there is a graph $G$ with $\Delta(G)=r$ and a tree $T$, each with $n$ vertices, such that $3\Delta(G)+\ell(T)-2< (1+\epsilon)n$, but $G$ and $T$ do not pack.
\end{prop}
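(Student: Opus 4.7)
The plan is to adapt the extremal construction already given for the case $\Delta(G) = 1$. Take $T = K_{1,n-1}$, so that $\ell(T) = n-1$, and let $G$ be any $r$-regular graph on $n$ vertices. Such a $G$ exists whenever $n \geq r+1$ and $rn$ is even, for example as a disjoint union of copies of $K_{r+1}$ when $(r+1) \mid n$. Then $\Delta(G) = r$ and, crucially, $G$ has no isolated vertex.

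The non-packing step is essentially immediate. For any bijection $f\colon V(T) \to V(G)$, the image of the center of the star is joined, in the embedded copy of $T$, to every other vertex; for $f$ to be a packing, that image would therefore have to be isolated in $G$. Since $G$ is $r$-regular with $r \geq 1$, no such vertex exists, so $G$ and $T$ do not pack.

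Finally, one computes
\[
3\Delta(G) + \ell(T) - 2 \;=\; 3r + (n-1) - 2 \;=\; n + 3(r-1),
\]
which is strictly less than $(1+\epsilon)n$ as soon as $n > 3(r-1)/\epsilon$. Choosing any $n$ above this threshold and satisfying the mild parity/divisibility condition above completes the construction. There is no real obstacle to the argument: it is just the ``star versus host graph with no isolated vertex'' pattern of the $r=1$ extremal example, extended to arbitrary regularity, and the only minor point is to check that an $r$-regular graph of order $n$ exists for suitable large $n$, which is standard.
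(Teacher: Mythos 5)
Your proof is correct, but your construction differs from the paper's. You pair the full star $K_{1,n-1}$ with an $r$-regular graph $G$ (e.g.\ a disjoint union of copies of $K_{r+1}$), and the non-packing argument is the immediate observation that the star's center would have to land on an isolated vertex. The paper instead takes $G=mK_r$ with $n=mr$ and lets $T$ be a subdivision of $K_{1,(m-1)r+1}$; there the center of the subdivided star has degree $(m-1)r+1=n-r+1$, which exceeds the $n-r$ non-neighbors available to any vertex of $mK_r$, so again no packing exists. Both constructions satisfy the required inequality for large $n$: yours gives $3\Delta(G)+\ell(T)-2=n+3(r-1)$, while the paper's gives $n+2r-4$, so each is $(1+\epsilon)n$ once $n$ is large enough, and each exceeds $n$ (as any non-packing example must, by Corollary~\ref{cor:sauerspencertree}). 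Your version is arguably cleaner --- it literally extends the $G=\frac{n}{2}K_2$, $T=K_{1,n-1}$ example mentioned just before the proposition, and it hits $\Delta(G)=r$ exactly, whereas the paper's choice $G=mK_r$ actually has $\Delta(G)=r-1$ (a harmless off-by-one against the statement). What the paper's subdivided star buys is a slightly smaller value of $3\Delta(G)+\ell(T)-2$ relative to $n$, i.e.\ a marginally sharper witness of tightness, by trading some of the star's leaves for subdivision vertices; for the proposition as stated this extra sharpness is not needed. Your existence remark for $r$-regular graphs (parity of $rn$ and $n\ge r+1$, or just $(r+1)\mid n$) adequately handles the only technical point your route introduces.
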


\begin{proof}
For $m,r\geq 2$ and $n=mr$, let $G=mK_r$ and let $T$ be any tree of order $n$ that is a subdivision of $K_{1,(m-1)r+1}$. It is readily seen that $G$ and $T$ do not pack for any $m$ and $r$. Given $\epsilon>0$, choose $m$ such that $m> 2/\epsilon$. Thus,
\begin{align*}
3\Delta(G)+\ell(T)-2 &= 3(r-1)+(m-1)r+1-2 \\
&= 2r+rm-4=\left(1+{2\over m}\right)n-4\\
&< (1+\epsilon)n.\qedhere
\end{align*}
\end{proof}

\section{Technical Lemmas}\label{section:lemmas}

We begin with the following useful result of Yin and Li.

\begin{theorem}[Yin and Li~\cite{YiLi05}]\label{thm:cliquegraph}
Let $\pi=(d_1,\dots,d_n)$ be a graphic sequence and $s\geq 1$ be an integer.
\begin{itemize}
\item[(i)] If $d_s\geq s-1$ and $d_{2s}\geq s-2$, then $\pi$ is potentially $K_s$-graphic.
\item[(ii)] If $d_s\geq s-1$ and $d_i\geq 2s-2-i$ for $1\leq i\leq s-1$, then $\pi$ is potentially $K_s$-graphic.
\end{itemize}
\end{theorem}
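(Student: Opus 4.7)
The plan is to reduce both parts to the Erd\H{o}s--Gallai characterization of graphic sequences. A direct consequence of the Kleitman--Wang residue operation states that $\pi$ is potentially $K_s$-graphic if and only if the residual sequence
$$\pi^* = (d_1 - (s-1), d_2 - (s-1), \ldots, d_s - (s-1), d_{s+1}, \ldots, d_n),$$
once sorted nonincreasingly, is graphic. The common hypothesis $d_s \geq s-1$ ensures every entry of $\pi^*$ is nonnegative, and the parity of $\sum \pi^*$ matches that of $\sum \pi$ since $s(s-1)$ is even. So the task becomes verifying the Erd\H{o}s--Gallai inequality
$$\sum_{i=1}^{k} d^*_i \;\leq\; k(k-1) + \sum_{i=k+1}^{n} \min(d^*_i, k)$$
for every $1 \leq k \leq n-1$, where $d^*_1 \geq \cdots \geq d^*_n$ denotes the sorted residual.

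For part (i), I would split into three ranges according to how $k$ compares with $s$ and $2s$. When $k \leq s$, the bound $d_s \geq s-1$ keeps the left side controlled while the ``free'' $k(k-1)$ on the right easily absorbs the difference. The delicate range is $s < k \leq 2s$, where the extra hypothesis $d_{2s} \geq s-2$ does the work: it guarantees that at least $2s$ original terms exceed a moderate threshold, so sufficiently many $d^*_i$ for $i$ just beyond $k$ meet the cap $k$ to balance the inequality. For $k > 2s$, the inequality follows essentially from the fact that $\pi$ itself is already graphic. For part (ii), the hypothesis $d_i \geq 2s-2-i$ for $1 \leq i \leq s-1$ takes the place of the $d_{2s}$ bound in the small-$k$ range: after subtraction these lower bounds give $d^*_i \geq s-1-i$ for $i < s$, which is exactly tight for the Erd\H{o}s--Gallai inequality to close; for $k \geq s$, the inequality is again inherited from $\pi$.

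The main obstacle I expect is carefully bookkeeping the re-sorting. After subtracting $s-1$ from the top $s$ entries of $\pi$, some of $d_{s+1}, \ldots, d_{2s-1}$ may exceed the new values $d_i - (s-1)$, so the sorted $\pi^*$ interleaves the subtracted and un-subtracted portions rather than simply concatenating them. I would handle this by introducing a pivot index $p$ at which the two portions meet in sorted order and splitting all sums at $p$; the hypotheses on $d_s$ (and on $d_{2s}$ in part (i), respectively on the initial $d_i$'s in part (ii)) control how far $p$ can drift from $s$, which keeps the case analysis finite and each case a short calculation.
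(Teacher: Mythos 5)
First, a point of reference: the paper does not prove this statement at all --- it is quoted from Yin and Li~\cite{YiLi05} and used as a black box --- so there is no in-paper argument to compare yours against, and your proposal must stand on its own as a proof of a result from the literature. It does not yet do so. The load-bearing step of your plan is the asserted equivalence ``$\pi$ is potentially $K_s$-graphic if and only if the residual sequence $\pi^*$ is graphic,'' which you describe as a direct consequence of the Kleitman--Wang residue operation. It is not: Kleitman--Wang lays off a single vertex to test graphicality and says nothing about realizations containing a prescribed clique. The easy direction (potentially $K_s$-graphic implies $\pi^*$ graphic) follows from Lemma~\ref{lemma:gould}, which provides a realization with $K_s$ on the $s$ largest-degree vertices whose clique edges can then be deleted --- but that is not the direction you use. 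The direction you need, that $\pi^*$ graphic implies $\pi$ is potentially $K_s$-graphic, requires producing a realization of $\pi^*$ in which the $s$ designated vertices are pairwise nonadjacent before the clique can be superimposed, and an arbitrary realization of $\pi^*$ may have edges inside that set. Removing them takes a genuine edge-exchange argument; this equivalence is essentially the content of Rao's characterization of potentially $K_s$-graphic sequences~\cite{Ra79} and is a theorem in its own right, not a one-line reduction. As long as it is only asserted, the rest of the plan has no foundation.

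Second, even granting the equivalence, what you have written is a strategy rather than a proof. The Erd\H{o}s--Gallai verification is described only qualitatively (``easily absorbs the difference,'' ``does the work,'' ``is again inherited from $\pi$''), and the one place you correctly identify as delicate --- the re-sorting of $\pi^*$ and the location of the pivot index $p$ --- is exactly where the computation stops. In particular, for $k>s$ the inequality for $\pi^*$ does not simply follow from the graphicality of $\pi$: after re-sorting, the left-hand side may include unsubtracted entries $d_{s+1},\dots$ promoted above subtracted ones, while the right-hand side loses contributions of the form $\min(d^*_i,k)$ where entries were reduced, so the comparison with the Erd\H{o}s--Gallai inequality for $\pi$ must be carried out explicitly and is not automatic. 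Until both the equivalence and these estimates are actually executed, the proposal is an outline of a possible proof, not a proof.
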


The following lemma from~\cite{GJL} is an extension of a corresponding result of Rao~\cite{Ra79} for cliques.

\begin{lemma}[Gould, Jacobson and Lehel~\cite{GJL}]\label{lemma:gould}
If $\pi$ is potentially $H$ graphic, then there is a realization $G$ of $\pi$
containing $H$ as a subgraph on the $|V(H)|$ vertices of largest degree in $G$.
\end{lemma}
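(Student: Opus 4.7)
My plan is an extremal argument combined with edge-swaps. Among all pairs $(G,\iota)$ where $G$ is a realization of $\pi$ and $\iota\colon V(H)\hookrightarrow V(G)$ is an embedding of $H$ as a subgraph of $G$, I would pick one that maximizes $\sum_{v\in V(H)}\deg_G(\iota(v))$. The claim is that then $\iota(V(H))$ consists of $|V(H)|$ vertices of largest degree in $G$, which is exactly the lemma.

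Suppose not. Then there exist $u\in\iota(V(H))$ and $w\in V(G)\setminus\iota(V(H))$ with $\deg_G(w)>\deg_G(u)$. Let $A$ be the image under $\iota$ of $N_H(\iota^{-1}(u))$, i.e., the neighbors of $u$ inside the embedded copy of $H$. I aim to build a realization $G'$ of $\pi$ that (i) retains every edge of $\iota(H)$ with both endpoints in $\iota(V(H))\setminus\{u\}$, and (ii) contains $wa$ for every $a\in A$. Once such a $G'$ exists, the map $\iota'$ obtained from $\iota$ by redirecting $\iota^{-1}(u)\mapsto w$ (and leaving all other vertices of $H$ fixed) is an embedding of $H$ into $G'$ whose image has strictly larger degree sum, contradicting the choice of $(G,\iota)$.

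To construct $G'$, I would perform a sequence of 2-switches (replacing edges $ab,cd$ by non-edges $ac,bd$; these preserve the degree sequence). Set $P:=N_G(w)\setminus(N_G(u)\cup\{u,w\})$ and $M:=N_G(u)\setminus(N_G(w)\cup\{u,w\})$; a routine case split on whether $uw\in E(G)$ gives $|P|\ge|M|+1$. For each $a\in A$ with $wa\notin E(G)$ (note $a\in M$, since $a\in N(u)\setminus N(w)$ and $a\neq u,w$), I pick $x\in P\setminus A$ and perform the swap $\{wx,ua\}\mapsto\{wa,ux\}$. This swap touches only edges incident to $u$ or $w$, so no edge of $\iota(H)$ lying wholly in $\iota(V(H))\setminus\{u\}$ is disturbed, and after the swap $wa$ becomes an edge.

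The main obstacle is the bookkeeping that consecutive swaps interact: after the swap for $a$, the vertex $a$ migrates into the new ``plus'' set and must be excluded from future swap partners (using it as $x$ would destroy $wa$ and undo the progress), while $x$ migrates to the ``minus'' side. I expect the key computation to be: each swap decreases $|P\setminus A|$ by exactly one (the chosen $x$ leaves $P$; the incoming $a$ is in $A$ and so does not contribute to $P\setminus A$). Hence after one swap per $a\in A\cap M$, the pool $P\setminus A$ still has at least $|P|-|A\cap M|\ge|M|+1-|A\cap M|\ge1$ element remaining, so in particular was nonempty at every intermediate stage. This validates the successive construction of $G'$, after which the embedding $\iota'$ yields the desired contradiction.
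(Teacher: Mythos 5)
The paper does not prove this lemma; it is imported verbatim from the cited reference of Gould, Jacobson and Lehel, so there is no in-paper argument to compare against. Your proof is correct and complete: the extremal choice of $(G,\iota)$ maximizing $\sum_{v\in V(H)}\deg_G(\iota(v))$, the case split giving $|P|\ge|M|+1$, and the observation that $P\cap A=\emptyset$ initially (since $A\subseteq N_G(u)$ while $P$ avoids $N_G(u)$) so that $|P\setminus A|$ starts at $|P|$ and drops by exactly one per switch, together validate all $|A\cap M|$ switches and yield the contradiction. This is essentially the standard 2-switch argument by which the result is proved in the literature, so nothing further is needed.
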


We next give a simple sufficient condition for a graphic sequence to have a realization containing any tree of order $t$.   

\begin{lemma}\label{lemma:tree_pot}
Let $T$ be a tree of order $t$ and let $\pi=(d_1,\dots,d_n)$ be a graphic sequence with $n\geq t$. If $d_{t-1}\geq t-1$, then $\pi$ is potentially $T$-graphic.
\end{lemma}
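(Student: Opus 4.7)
The plan is to prove the lemma by induction on $t$. The base case $t = 2$ is immediate, since the hypothesis $d_1 \geq 1$ guarantees every realization of $\pi$ contains an edge, hence a copy of $K_2 = T$.

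For the inductive step with $t \geq 3$, I would pick a leaf $v$ of $T$ with its unique neighbor $u$ and set $T' := T - v$, a tree on $t - 1$ vertices. Since $d_{t-2} \geq d_{t-1} \geq t-1 \geq (t-1)-1$, the inductive hypothesis applies to $\pi$ and $T'$, so $\pi$ is potentially $T'$-graphic. I would then invoke Lemma~\ref{lemma:gould} to obtain a realization $G$ of $\pi$ that contains $T'$ on the set $S$ of $t-1$ vertices of largest degree in $G$; let $x \in S$ be the image of $u$ under this embedding.

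The key observation is that $\deg_G(x) \geq d_{t-1} \geq t - 1$, because $x$ lies among the top $t-1$ vertices of $G$, while at most $|S| - 1 = t-2$ of $x$'s neighbors can lie inside $S$. Hence $x$ has at least one neighbor $w \in V(G) \setminus S$. Since $T'$ is embedded entirely within $S$ and $n \geq t$ ensures such a $w$ is outside the image of $T'$, extending the embedding by mapping $v \mapsto w$ yields a copy of $T$ in $G$, so $\pi$ is potentially $T$-graphic. The argument is essentially mechanical once the inductive setup is in place; the only real design choice is to recognize that Lemma~\ref{lemma:gould} is the appropriate tool, as it forces the already-embedded $T'$ to occupy high-degree vertices and thereby guarantees enough slack at the image of $u$ to accommodate the new leaf.
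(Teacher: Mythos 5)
Your proposal is correct and follows essentially the same route as the paper's proof: induction on $t$, deleting a leaf, applying the induction hypothesis together with Lemma~\ref{lemma:gould} to place $T'$ on the $t-1$ highest-degree vertices, and then using $d_{t-1}\geq t-1$ to find a neighbor outside that set to reattach the leaf. No gaps.
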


\begin{proof}
We proceed by induction on $t$. If $t=2$, the claim follows immediately, so suppose that $t>2$ and let $\pi=(d_1,\dots,d_n)$ be a graphic sequence with $d_{t-1}\geq t-1$. 

Let $T$ be any tree of order $t$, $v$ be a leaf of $T$, and $T'=T\setminus\{v\}$. Thus, $T'$ is a tree of order $t-1$.
As $\pi$ is a non-increasing sequence, $d_{t-2}\geq t-1>t-2$, so by the induction hypothesis, $\pi$ is potentially $T'$-graphic.  By Lemma~\ref{lemma:gould}, there is a realization $G$ of $\pi$ such that $T'$ is a subgraph of $G[v_1,\dots,v_{t-1}]$. 
As $\deg_G(v_i)\geq t-1$ for all $i\leq t-1$, $|N_G(v_i)\cap\{v_t,\dots,v_n\}|\geq 1$ for all $i\leq t-1$. 
Hence we may attach a leaf to any vertex of $T'$ to attain a copy of $T$, so $\pi$ is potentially $T$-graphic.
\end{proof}

Finally, we combine Lemma~\ref{lemma:tree_pot} and Theorem~\ref{thm:cliquegraph} to obtain the following.  

\begin{prop}\label{prop:degrees}
Let $T$ be a tree on $t$ vertices, let $s\leq t-2$, and set $n=t+s-2$.  
If $\pi=(d_1,\dots,d_n)$ is a graphic sequence such that 
 $\pi$ is not potentially $T$-graphic and
 $\overline{\pi}$ is not potentially $K_s$-graphic, 
then $d_{t-s-1}\geq t$ and $d_t\geq t-s+1$.
\end{prop}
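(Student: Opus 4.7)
The plan is to deduce both inequalities directly from the hypotheses by combining Lemma~\ref{lemma:tree_pot} with the two parts of Theorem~\ref{thm:cliquegraph}, carrying out the index translation $\overline{d}_i=n-1-d_{n+1-i}$ with $n=t+s-2$ (so $n+1-s=t-1$ and $n+1-2s=t-s-1$).

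First I would note that since $\pi$ is not potentially $T$-graphic and $n\geq t$, the contrapositive of Lemma~\ref{lemma:tree_pot} gives $d_{t-1}\leq t-2$. This single inequality on $\pi$ will be invoked twice to discard one of two disjuncts.

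For the bound $d_{t-s-1}\geq t$, I would apply the contrapositive of Theorem~\ref{thm:cliquegraph}(i) to $\overline{\pi}$: either $\overline{d}_s\leq s-2$ or $\overline{d}_{2s}\leq s-3$. Translating through the formula for $\overline{d}_i$, these read $d_{t-1}\geq t-1$ or $d_{t-s-1}\geq t$ respectively. The first is impossible by the previous step, leaving $d_{t-s-1}\geq t$.

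For the bound $d_t\geq t-s+1$, I would apply the contrapositive of Theorem~\ref{thm:cliquegraph}(ii) to $\overline{\pi}$: either $\overline{d}_s\leq s-2$ or some $i\in\{1,\dots,s-1\}$ satisfies $\overline{d}_i\leq 2s-3-i$. Again the first disjunct is ruled out, so some such $i$ produces $d_{t+s-1-i}\geq t-s+i$. Setting $j=t+s-1-i\in\{t,\dots,t+s-2\}$, this becomes $d_j\geq 2t-1-j$. Since $\pi$ is non-increasing and $j\geq t$, I get $d_t\geq d_j\geq 2t-1-j\geq 2t-1-(t+s-2)=t-s+1$, as desired.

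The argument is essentially index bookkeeping; the only mildly subtle point is the last step, where Theorem~\ref{thm:cliquegraph}(ii) yields only an existential statement over $j\in\{t,\dots,t+s-2\}$, but because $\pi$ is monotone the weakest case $j=t+s-2$ still delivers the bound $t-s+1$ on $d_t$.
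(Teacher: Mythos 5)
Your proof is correct and follows essentially the same route as the paper's: both derive $d_{t-1}\leq t-2$ from Lemma~\ref{lemma:tree_pot}, then apply the two parts of Theorem~\ref{thm:cliquegraph} to $\overline{\pi}$ and translate indices via $\overline{d}_i=n-1-d_{n+1-i}$. The only cosmetic difference is in the last step, where the paper notes the failing index satisfies $i\leq s-2$ and uses monotonicity of $\overline{\pi}$, while you keep $i\leq s-1$ and use monotonicity of $\pi$; both yield the same bound.
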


\begin{proof}
If $\pi$ is not potentially $T$-graphic, then $d_{t-1}\leq t-2$. Therefore, $\overline{d}_s=n-1-d_{t-1}\geq s-1$. By Theorem \ref{thm:cliquegraph}, if $\overline{d}_{2s}\geq s-2$, then $\overline{\pi}$ is potentially $K_s$-graphic. As this is not the case, $\overline{d}_{2s}\leq s-3$. Hence, $d_{t-s-1}=n-1-\overline{d}_{2s}\geq t$.

Furthermore, since $\overline{d}_s\geq s-1$, we see that $\overline{d}_i\leq 2s-3-i$ for some $1\leq i\leq s-2$ (else $\overline{\pi}$ is potentially $K_s$-graphic by Theorem~\ref{thm:cliquegraph}).  Therefore 
\[d_t=n-1-\overline{d}_{s-1}\geq n-1-\overline{d}_i \geq n-1-(2s-4)=t-s+1.\qedhere\]
\end{proof}

\section{Proof of Theorems \ref{theorem:potram_clique_star} and \ref{theorem:tree_clique}}\label{section:proofs}

We begin by proving Theorem \ref{theorem:potram_clique_star}.

\begin{proof}[Proof of Theorem~\ref{theorem:potram_clique_star}]
When $s\leq t-2$, the lower bound is established by Proposition~\ref{prop:LBtreevsnoisol}.  When $s\geq t-2$, the lower bound is established by considering the graphic sequence $\pi=(d_1,\dots,d_{2s-1})$ where $d_i=2$ for all $i$. 
As $t\geq 4$, $\pi$ is not potentially $K_{1,t-1}$-graphic. 
Further, in any realization of $\pi$, the largest independent set has size at most $s-1$, so $\overline{\pi}$ is not potentially $K_s$-graphic.

For the upper bound, consider any graphic sequence $\pi=(d_1,\dotsc,d_n)$ with $n=s+\max\{s,t-2\}$.  If $d_1\geq t-1$, then $\pi$ is potentially (in fact forcibly) $K_{1,t-1}$-graphic.  So suppose $d_1\leq t-2$.  Thus, we have $\overline{d}_n=n-1-d_1\geq n-1-(t-2)=n-t+1\geq s-1$; since also $n\geq2s$, Theorem~\ref{thm:cliquegraph} implies that $\overline{\pi}$ is potentially $K_s$-graphic.
\end{proof}

We are now ready to prove Theorem~\ref{theorem:tree_clique}, the main result of this paper.  

\begin{proof}[Proof of Theorem~\ref{theorem:tree_clique}.]
Let $\pi=(d_1,\dots,d_n)$ be a graphic sequence with $n=t+s-2$ such that $\pi$ is not potentially $T$-graphic and $\overline{\pi}$ is not potentially $K_s$-graphic.

(i) Suppose that $\ell(T)\geq s+1$ and let $S$ be any set of $s+1$ leaves of $T_t$. Let $T'=T\setminus S$, so $T'$ is a tree of order $t-s-1$. By Proposition~\ref{prop:degrees}, $d_{t-s-2}\geq t>t-s-2$, so $\pi$ is potentially $T'$-graphic. Let $G$ be a realization of $\pi$ with $T'$ as a subgraph of $G[v_1,\dots,v_{t-s-1}]$. Because $\deg_G(v_i)\geq t$ for all $i\leq t-s-1$, $|N_G(v_i)\cap\{v_{t-s},\dots,v_{t-s-1}\}|\geq s+2$ for all $i\leq t-s-1$. Hence, we can reattach the vertices in $S$ to attain a copy of $T$, contradicting the fact that $\pi$ is not potentially $T$-graphic.

(ii) Suppose that $\ell(T)\leq s$ and let $G$ be any realization of $\pi$ and define $G_t=G[v_1,\dots,v_t]$. As $d_t\geq t-s+1$ and $|V(G)|=t+s-2$, $\delta(G_t)\geq d_t-(s-2)=t-2s+3$, so $\Delta(\overline{G_t})=t-1-\delta(G_t)\leq 2s-4$. Because $t>7(s-2)$,
\[
3\Delta(\overline{G_t})+\ell(T)-2\leq 3(2s-4)+s-2=7s-14< t.
\]
By applying Corollary~\ref{cor:sauerspencertree}, we see that $T$ and $\overline{G_t}$ pack, or in other words, $T$ is a subgraph of $G_t$, again contradicting the fact that $\pi$ is not potentially $T$-graphic.
\end{proof}

\end{document}